\newcommand{\mathsym}[1]{{}}
\newcommand{\unicode}[1]{{}}
\newcommand{\R}{\ensuremath{\mathbb{R}}}
\newcommand{\f}{\varphi}
\newcommand{\de}{\delta}
\newcommand{\sgn}{\mathrm{sign}}
\newtheorem {theorem} {Theorem}
\newtheorem {proposition} [theorem]{Proposition}
\newtheorem {corollary}{Corollary}
\newtheorem {lemma}  [theorem]{Lemma}
\newtheorem {remark}{Remark}
\newtheorem {mtheorem} {Theorem}
\def\R{\mathbb R}
\title[On the non-existence of isochronous tangential centers]
{On the non-existence of isochronous tangential centers\\ in Filippov vector fields}
\author[D. D. Novaes and L. A. Silva]
{Douglas D. Novaes and Leandro A. Silva}
\address{Departamento de Matem\'{a}tica - Instituto de Matem\'{a}tica, Estat\'{i}stica e Computa\c{c}\~{a}o Cient\'{i}fica (IMECC) - Universidade
Estadual de Campinas (UNICAMP), \ Rua S\'{e}rgio Buarque de Holanda, 651, Cidade Universit\'{a}ria Zeferino Vaz, 13083-859, Campinas, SP,
Brazil}
\email{ddnovaes@unicamp.br}
\email{lasilva@ime.unicamp.br}
\begin{document}

\subjclass[2010]{34C23,34A36}

\keywords{Filippov vector fields, monodromic tangential singularities, tangential centers, isochronicity problem, criticality problem, period function}

\maketitle
\begin{abstract}
The isochronicity problem is a classical problem in the qualitative theory of planar vector fields  which consists in characterizing whether a center is isochronous or not, that is, if all the trajectories in a neighborhood of the center have the same period. This problem is usually investigated by means of the so-called period function.  In this paper, we are interested in exploring the isochronicity problem for tangential centers of planar Filippov vector fields.  By computing the period function for planar Filippov vector fields around tangential centers, we show that such centers are never isochronous. 
\end{abstract}

\section{Introduction}

In planar vector fields, the {\it isochronicity problem} concerns about distinguishing whether a center is isochronous or not.  Recall that a center of a planar vector field is called {\it isochronous} if the {\it period function}  $T:S\to\R$ is constant. The period function is defined in a Poincar\'{e} section $S$ transverse to the period annulus and corresponds to the period of the trajectory starting at a point in $S$. In other words, a center is isochronous provided that every trajectory in a neighborhood of it have the same period. The isochronicity problem goes back to C. Huygens with his studies on the pendulum clock that oscillates isochronously \cite{huygens}.  For smooth planar vector fields, Poincaré and Lyapunov showed that the isochronicity of a center is directly connected with its linearizability \cite{romanovski2009center}. Their discovery has driven the subsequent studies on the isochronicity problem, which has received a lot of attention since then (see, for instance, \cite{chavarriga99, pleshkan69, romanovski2007,sanchez}).

More recently, the isochronicity problem has also been considered for planar non-smooth vector fields of kind
\begin{equation} \label{sistemainicial0}
Z(x,y)= \begin{cases}
Z^+(x,y), & h(x,y) > 0, \\
Z^-(x,y),& h(x,y) < 0,
\end{cases}
\end{equation}
where $h:\R^2\to\R$ is a smooth function having $0$ as a regular value, $Z^{\pm}$ are smooth vector fields,  and  $\Sigma=h^{-1}(0)$ is the discontinuity manifold.  Here, we assume that the trajectories of \eqref{sistemainicial0} obey the Filippov's convention \cite{Filippov88}. Accordingly, \eqref{sistemainicial0} is called Filippov vector field. Regarding the existence of isochronous centers in Filippov vector fields, conditions on a family of piecewise quadratic systems were provided in \cite{coll00}  to ensure that the origin is an isochronous center. Equally important, one can find results about the non-existence of isochronous centers, for instance, in \cite{manosas05}, it was showed that the origin of the non-smooth oscillator $\ddot x+g(x)\,\sgn\,\dot x+x=0$ is never a isochronous center for any analytic function $g$ satisfying $g(0)=g'(0)=0$.  Finally, in \cite{buzzigassul13}, conditions were obtained for piecewise linear vector fields to have an isochronous center at the infinity.  The papers above, but the last one, investigated the isochronicity problem around a focus-focus center.

In \cite{novaessilva2020}, the center-focus and cyclicity problems were considered for planar Filippov vector fields with monodromic tangential singularities. It was obtained a general recursive formula for the Lyapunov coefficients, which control whether a monodromic singularity is a center or a focus. Here, using the ideas from  \cite{novaessilva2020}, we study the isochronicity problem for planar Filippov vector fields around tangential centers.

A tangential singularity $p\in\Sigma^t$ of a planar Filippov vector field $Z: U \in \mathbb{R}^2 \to \mathbb{R}^2$  is called a $(2k^+,2k^-)$-{\it monodromic tangential singularity} provided that $p$ is simultaneously an invisible $2k^{+}$-multiplicity contact of $Z^{+}$ with $\Sigma$ and an invisible $2k^{-}$-multiplicity contact of $Z^{-}$ with $\Sigma,$ and $Z$ has a first-return map defined on $\Sigma$ around $p$ (see \cite[Definition 1]{novaessilva2020}). Recall that, for a smooth vector field $F$, a {\it $k$-multiplicity contact} between $F$ and $\Sigma$ is defined as a point  $p\in\Sigma$ satisfying
\[
F h(p) = F^2h(p) = \ldots = F^{k-1}h(p) =0,\text{ and } F^{k} h(p)\neq 0,
\]
where $Fh(p)=\langle\nabla h(p),F(p)\rangle,$ for $n=1$, and $F^{n}h(p) =  F (F^ {n-1}h)(p),$ for $n>1$. In addition, we say that a $2k$-multiplicity contact is {\it invisible} for $Z^+$ (resp. $Z^-$) provided that $(Z^{+})^{2k}h(p)<0$ (resp. $(Z^{-})^{2k}h(p)>0$). 

A $(2k^+,2k^-)$-monodromic tangential singularity of a planar Filippov vector field is called $(2k^+,2k^-)$-{\it tangential center} provided that it has a neighborhood of where the first return map is the identity. 

\bigskip

The main result of this paper states that tangential centers are never isochronous.

\begin{mtheorem}\label{teo:iso}
A $(2k^+,2k^-)$-tangential center of a planar Filippov vector field is not isochronous. 
\end{mtheorem}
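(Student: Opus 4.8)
The plan is to compute the period function $T$ on a transversal section $S$ to the period annulus and to show that $T(\rho)\to 0$ as $\rho$ approaches the center; since $T$ is positive on $S$, this alone rules out $T$ being constant. As the statement is local, I would place the tangential center at the origin with $h(x,y)=y$ near it, so that $\Sigma$ is the $x$-axis, $Z^{+}$ governs $\{y>0\}$ and $Z^{-}$ governs $\{y<0\}$. Parametrizing the periodic orbits by the abscissa $\rho>0$ of one of their two crossing points with $\Sigma$, each orbit $\gamma_{\rho}$ decomposes as $\gamma_{\rho}=\gamma_{\rho}^{+}\cup\gamma_{\rho}^{-}$, where $\gamma_{\rho}^{+}$ is an arc of $Z^{+}$ lying in $\overline{\{y\geq 0\}}$ and $\gamma_{\rho}^{-}$ an arc of $Z^{-}$ lying in $\overline{\{y\leq 0\}}$, both joining the two crossing points, which are transversal crossing points (no sliding occurs near a monodromic tangential singularity). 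Hence $T(\rho)=T^{+}(\rho)+T^{-}(\rho)$, where $T^{\pm}(\rho)$ is the time spent along $\gamma_{\rho}^{\pm}$, no time being spent on $\Sigma$.

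The key remark I would use is that the origin is \emph{not} an equilibrium of $Z^{\pm}$: if $Z^{\pm}(0)=0$, then every iterated Lie derivative $(Z^{\pm})^{j}h(0)$ would vanish, contradicting $(Z^{\pm})^{2k^{\pm}}h(0)\neq 0$. Since moreover $(Z^{\pm})h(0)=0$, the vector $Z^{\pm}(0)$ is a nonzero horizontal vector, so the first component of $Z^{\pm}$ keeps a fixed sign and stays bounded away from $0$ on a fixed neighborhood $U_{0}$ of the origin. By the structure of the tangential center, the orbits $\gamma_{\rho}$ form a nested family of closed curves shrinking to the origin as $\rho\to 0^{+}$, so $\gamma_{\rho}\subset U_{0}$ for $\rho$ small; along $\gamma_{\rho}^{\pm}$ the $x$-coordinate is then strictly monotone and sweeps an interval whose length tends to $0$ with $\rho$, which forces $T^{\pm}(\rho)\to 0$ and hence $T(\rho)\to 0$ as $\rho\to 0^{+}$. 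Since $T>0$ on $S$, a constant $T$ would have to be identically $0$, impossible for a genuine periodic orbit; therefore $T$ is non-constant and the tangential center is not isochronous.

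To obtain the sharper, quantitative statement underlying the paper — which also feeds the criticality problem — I would instead produce the asymptotic expansion of $T^{\pm}(\rho)$ as $\rho\to 0^{+}$ following the strategy of \cite{novaessilva2020}: a rescaling (directional blow-up) adapted to an invisible contact of even multiplicity $2k^{\pm}$ desingularizes the passage and turns the half-transition map $\varphi^{\pm}$ and the half-period $T^{\pm}$ into analytic functions of $\rho$, giving $T^{\pm}(\rho)=\alpha^{\pm}\rho+\mathcal{O}(\rho^{2})$ with $\alpha^{\pm}>0$ — the transit time being governed to leading order by the horizontal extent $\sim\rho$ of $\gamma_{\rho}^{\pm}$ divided by the horizontal speed $|(Z^{\pm})_{1}(0)|$. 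Summing, $T(\rho)=(\alpha^{+}+\alpha^{-})\rho+\mathcal{O}(\rho^{2})$ with a strictly positive first coefficient, so $T$ cannot be constant, and the higher-order coefficients control the number of critical periods. The main obstacle is precisely this quantitative step: handling the transition map and the transit time through an invisible tangency of arbitrary even order, keeping track of how the center condition $\varphi^{-}\circ\varphi^{+}=\mathrm{id}$ relates the two one-sided expansions, and evaluating the coefficients — the folds $k^{\pm}=1$ being the model case. For Theorem~\ref{teo:iso} by itself, though, the qualitative argument above is already conclusive.
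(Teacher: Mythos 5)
Your argument is correct, but it proves Theorem~\ref{teo:iso} by a genuinely different and more elementary route than the paper. The paper first establishes an explicit integral formula for the period function (Theorem~\ref{teo:period}), obtained by reparametrizing time so that $\widetilde Z^{\pm}=Z^{\pm}/|X^{\pm}|$ has horizontal speed $\pm\delta$ and then undoing the reparametrization via Proposition~\ref{prop:criticality}; non-isochronicity then follows from Corollary~\ref{cor:noisochronous}, which computes $\widehat T_0=T(0)=0$ and $\widehat T_1=T'(0)=2\delta\big(X^-(0,0)-X^+(0,0)\big)/\big(X^+(0,0)X^-(0,0)\big)>0$. You instead observe that $X^{\pm}(0,0)\neq 0$ forces $x$ to be monotone along each half-arc with speed bounded below by some $c>0$ on a fixed neighborhood, so that $T^{\pm}(\rho)\le |\rho-\f(\rho)|/c\to 0$, whence $T(\rho)\to 0$ while $T(\rho)>0$; this already excludes a constant period function without computing any coefficient. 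The one step you should make explicit is why $\gamma_\rho^{\pm}$ stays in the fixed neighborhood $U_0$ for $\rho$ small (so that the lower bound on $|X^{\pm}|$ applies along the whole arc); this follows from continuity of the flow together with $\f(\rho)\to 0$ and the bound $|dy/dx|=|Y^{\pm}/X^{\pm}|\le M$ on $U_0$, and is standard, but as written it is asserted rather than proved. What your shortcut gives up is exactly what the paper is after beyond Theorem~\ref{teo:iso}: the closed-form $T^{\pm}$ in \eqref{TpTn}, the value of $\widehat T_1$, and the recursive formulae for all period constants feeding the criticality problem; your second paragraph correctly identifies that quantitative program (and your leading-order coefficient $\alpha^++\alpha^-=2/|X^+(0,0)|+2/|X^-(0,0)|$ agrees with the paper's $\widehat T_1$), but for the theorem as stated your qualitative argument is already conclusive.
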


In order to prove Theorem \ref{teo:iso}, we shall construct the period function of planar Filippov vector fields around tangential centers. Accordingly, suppose that the Filippov vector field \eqref{sistemainicial0} has a $(2k^+,2k^-)$-tangential center at $p\in\Sigma.$ Assume that $p=(0,0)$ and $h(x,y)=y.$ It can be done, without loss of generality, by taking local coordinates. Thus, the Filippov vector field $Z,$  restricted to a neighborhood $V$ of the origin, writes as
\begin{equation} \label{sistemainicial}
Z(x,y)= \begin{cases}
Z^{+}(x,y)=(X^+(x,y), Y^+(x,y)), & y > 0, \\
Z^{-}(x,y)=(X^-(x,y), Y^-(x,y)),& y < 0,
\end{cases}
\end{equation}
 for which the discontinuity manifold is now given by $\Sigma=\{(x,y)\in V:\,y=0\}$.
It is stated in \cite{novaessilva2020} that the origin is a $(2k^+,2k^-)$-monodromic tangential singularity for \eqref{sistemainicial} if, and only if, the following conditions hold:

\bigskip

\noindent{\bf C1.} $X^{\pm}(0,0)\neq0,$ $Y^{\pm}(0,0)=0,$ $\dfrac{\partial^i Y^{\pm}}{\partial x^i}(0,0)=0$  for  $i=1,\ldots,2k^{\pm}-2,$ and  $\dfrac{\partial^{2k^{\pm}-1} Y^{\pm}}{\partial x^{2k^{\pm}-1} }(0,0)\neq0$;

\medskip

\noindent{\bf C2.} $X^+(0,0)\dfrac{\partial^{2k^{+}-1} Y^{+}}{\partial x^{2k^{+}-1} }(0,0)<0$ and $X^-(0,0)\dfrac{\partial^{2k^{-}-1} Y^{-}}{\partial x^{2k^{-}-1} }(0,0)>0;$

\bigskip

\noindent{\bf C3.} $X^+(0,0) X^-(0,0)<0.$

\bigskip

\noindent Summarizing, conditions {\bf C1} and {\bf C2} provide that the origin is an invisible $2k^+$-multiplicity contact (resp.  invisible  $2k^-$-multiplicity contact)  between $Z^+$ (resp. $Z^-$) and $\Sigma,$ and condition {\bf C3} provides that the trajectories of $Z^+$ and $Z^-$ can be concatenated  at $\Sigma$ around the origin in order to define a first-return map.

Accordingly, half-return maps $\varphi^{+}$ and $\varphi^{-}$ on $\Sigma$ around $0$ are defined by the flows of $Z^{+}|_{y\geq0}$ and $Z^{-}|_{y\leq0},$ respectively. Since we are assuming that the Filippov vector field \eqref{sistemainicial} has a tangential center at the origin, then $\varphi^+=\varphi^-$ and, in this case, we denote the half-return map only by $\f$. In addition, $\f$ is a smooth involution around the origin satisfying $\f(0)=0$ (see \cite[Section 4.2]{AndGomNov19}) and, therefore, $\f'(0)=-1$. 

Now, since
$X^{\pm}(0,0)\neq0$,  there exists a small neighborhood $U$ of the origin such that $X^{\pm}(x,y) \neq0$ for all $(x,y) \in U.$  Thus, we can define 
	\begin{equation}\label{eq:eta} 
		\eta^+(x,y) = \delta\dfrac{Y^+(x,y)}{X^+(x,y)}\,\text{ and }\, \eta^-(x,y) = -\delta\dfrac{Y^-(x,y)}{X^-(x,y)},
	\end{equation}
	where
	\begin{equation}\label{delta}
\delta=\textrm{sign}(X^+(0,0))=-\textrm{sign}(X^-(0,0)).
\end{equation}
Notice that $\delta>0$ (resp. $\delta<0$) implies that the flow of $Z$ turns around the origin in the clockwise (resp. anti-clockwise) direction.

Finally, for each $x\in\R$ such that $(x,0)\in U$, let $t\mapsto y^{\pm}(t,x)$ be the solution of the initial value problem  
\begin{equation}\label{ivp}
\dfrac{dy}{dt}=\eta^{\pm}(x,y),\quad y(0)=0.
\end{equation}
	
The next result provides an expression for the period function of the Filippov vector field \eqref{sistemainicial} around the tangential center at the origin in terms of the half-return map $\f$ and the functions $y^{\pm}$.

\begin{mtheorem}\label{teo:period}
	Assume that the Filippov vector field \eqref{sistemainicial} has a $(2k^{+},2k^-)$-tangential center at the origin and let $\f$ be the associated half-return map. Then, the period function is given by
\begin{equation}\label{pf}
	T(x)= \delta (T^-(x)-T^+(x)),
\end{equation}	
where 
\begin{equation}\label{TpTn}
	\begin{aligned}
	T^{\pm}(x)&= (\f(x)-x) \int_0^1 \dfrac{1}{X^{\pm}\big(x+(\f(x)-x) t, y^{\pm}(\pm \delta (\f(x)-x) t, x)\big)}\,dt.
	\end{aligned}
\end{equation}	
\end{mtheorem}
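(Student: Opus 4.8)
The plan is to read the period off a single periodic orbit and then rewrite it in the integral form \eqref{TpTn}. Since the origin is a tangential center, each orbit of the period annulus meets $\Sigma$ transversally at exactly two points; writing $(x,0)$ for one of them, the other is $(\f(x),0)$, and the orbit joins them by an arc of a trajectory of $Z^{+}$ contained in $\{y\geq0\}$ followed by an arc of a trajectory of $Z^{-}$ contained in $\{y\leq0\}$ (the orbit crosses $\Sigma$ transversally at both points, so no sliding is involved). Hence
\[
T(x)=\tau^{+}(x)+\tau^{-}(x),
\]
where $\tau^{+}(x)$ (resp.\ $\tau^{-}(x)$) is the time the orbit spends in $\{y\geq0\}$ (resp.\ $\{y\leq0\}$), and it is enough to compute these two sojourn times.

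Take $\tau^{+}(x)$. Condition \textbf{C1} gives $X^{+}\neq0$ on $U$, so along the $Z^{+}$-arc the abscissa is a strictly monotone function of time; the arc is therefore the graph of a function of the abscissa over the segment with endpoints $x$ and $\f(x)$, and integrating $\mathrm{d}t=\mathrm{d}x/X^{+}$ along it expresses $\tau^{+}(x)$ as $\int\mathrm{d}x/X^{+}$ over that segment, up to a sign that records in which sense the flow runs the abscissa. The affine substitution $x\mapsto x+(\f(x)-x)t$, $t\in[0,1]$, then turns this into $\f(x)-x$ times an integral over $t\in[0,1]$, which is the shape of $T^{+}(x)$. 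It remains to identify the ordinate of the arc: by \eqref{eq:eta} the trajectories of $Z^{+}$ satisfy $\mathrm{d}y/\mathrm{d}x=Y^{+}/X^{+}=\delta\,\eta^{+}$, so once the arc is parametrized so that its abscissa advances at the constant speed $\delta$, its ordinate solves the initial value problem \eqref{ivp} for $y^{+}$; hence the ordinate at abscissa $x+(\f(x)-x)t$ equals $y^{+}\bigl(\delta(\f(x)-x)t,x\bigr)$, and $\tau^{+}(x)$ becomes $\pm\delta\,T^{+}(x)$. The same argument applied to $Z^{-}$, now with the extra sign in $\eta^{-}$ from \eqref{eq:eta}, gives $\tau^{-}(x)=\mp\delta\,T^{-}(x)$.

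Adding the two contributions and collecting signs yields \eqref{pf}: because the flow turns monotonically around the origin, the abscissa is traversed in opposite senses along the two arcs, which is what makes the sum a difference $T^{-}(x)-T^{+}(x)$ rather than $T^{+}(x)+T^{-}(x)$, and the surviving global factor is $\delta=\textrm{sign}(X^{+}(0,0))$ by \eqref{delta}. A useful check along the way: since $\f$ is an involution fixing the origin, the right-hand side of \eqref{pf} is invariant under $x\mapsto\f(x)$, exactly as a period function on a transversal should be.

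I expect the real work to be in the middle step — justifying cleanly that the ordinate of each orbit arc coincides, after the correct rescaling of the time variable, with the auxiliary solution $y^{\pm}$ of \eqref{ivp}; this is where the structure imposed by \textbf{C1}--\textbf{C3} and the constructions of \cite{novaessilva2020} come into play — together with the careful tracking of the four signs involved ($\delta$; which side of $\Sigma$ each arc lies on; the direction in which the abscissa is run along each arc; and the minus sign in the definition of $\eta^{-}$) so that they assemble into the single $\delta$ appearing in \eqref{pf}. The change of variables and the final summation are then routine.
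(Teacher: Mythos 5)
Your argument is correct and is in substance the paper's own proof: both rest on normalizing the horizontal speed to $\pm\delta$, so that the flight time of each arc becomes $\pm\delta(\f(x)-x)$, the ordinate of the arc becomes the auxiliary solution $y^{\pm}$ of \eqref{ivp}, and the original sojourn times are recovered by integrating $1/X^{\pm}$ along the arc followed by the affine change of variables. The paper merely packages your direct $\mathrm{d}t=\mathrm{d}x/X^{\pm}$ computation as a formal time-reparametrization (the auxiliary field $\widetilde{Z}^{\pm}=Z^{\pm}/|X^{\pm}|$ of \eqref{sistemacanonico} together with Proposition \ref{prop:criticality}), which also settles the sign bookkeeping you flag at the end: $T^{+}(x)T^{-}(x)\le 0$ and the factor $\delta$ is exactly what makes $T(x)=\delta(T^{-}(x)-T^{+}(x))\ge 0$.
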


Theorem \ref{teo:period} is proven in Section \ref{proof:teoB}. In its proof, we shall see that $T^-(x)T^+(x)\leq0$ and that the constant $\de$ corrects the sign of  $T^-(x)-T^+(x)$ in such way that $T(x)\geq0$.

It  has been proven in  \cite[Theorem A]{novaessilva2020}  that the half-return map $\f$ is analytic in a neighborhood of $x=0$ provided that $Z^{+}$ and $Z^{-}$ are  analytic in a neighborhood of the origin. Therefore, one can easily see that the period function $T(x)$ given by Theorem \ref{teo:period} is also analytic in a neighborhood of $x=0$ provided that $Z^{+}$ and $Z^{-}$  are analytic in a neighborhood of the origin. In this case, the {\it period constants} $\widehat{T}_i$, $i\in\mathbb{N}$, are defined as the coefficients of the power series of $T(x)$ around $x=0$, that is,
\begin{equation}\label{def:pc}
	T(x)= \sum_{i=0}^{\infty} \widehat{T}_i x^i.
\end{equation}
An Appendix is provided with the formulae for computing all the period constants.

Clearly, an isochronous center must satisfies $\widehat T_0\neq0$ and $\widehat T_i=0$ for every $i\in\mathbb{N}\setminus\{0\}$. Accordingly, the proof of Theorem \ref{teo:iso} follows immediately from the following corollary of Theorem \ref{teo:period}, which can be obtained from \eqref{pf} and \eqref{TpTn} just by noticing that $\f'(0)=-1$ and taking into account condition {\bf C3} and relationship \eqref{delta}. 

\begin{corollary}\label{cor:noisochronous} Assume that Filippov vector field \eqref{sistemainicial} has a $(2k^{+},2k^-)$-tangential center at the origin and let $T(x)$ be the period function given by \eqref{pf}. Then,
\[
\widehat{T}_0:=T(0)=0\,\text{ and }\, \widehat{T}_1:=T'(0)= 2\delta\Big(\dfrac{X^-(0,0)-X^+(0,0)}{X^+(0,0)X^-(0,0)}\Big)>0.
\]
\end{corollary}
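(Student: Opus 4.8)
The plan is to extract the two period constants $\widehat T_0$ and $\widehat T_1$ directly from the closed-form expression \eqref{pf}--\eqref{TpTn} by evaluating $T^{\pm}$ and their first derivatives at $x=0$. Since the period function is $T(x)=\delta(T^-(x)-T^+(x))$, it suffices to compute $T^{\pm}(0)$ and $(T^{\pm})'(0)$, and the key observation that makes everything transparent is that $T^{\pm}(x)=(\f(x)-x)\,I^{\pm}(x)$ where $I^{\pm}(x)$ is the integral in \eqref{TpTn}, a smooth function of $x$ near $0$. Because $\f(0)=0$, the factor $\f(x)-x$ vanishes at $x=0$, which immediately gives $T^{\pm}(0)=0$ and hence $\widehat T_0=T(0)=0$.

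For the first derivative, the product rule gives $(T^{\pm})'(0)=(\f'(0)-1)\,I^{\pm}(0)$, since the second term $(\f(0)-0)\,(I^{\pm})'(0)$ vanishes. Using $\f'(0)=-1$ (established in the excerpt from the fact that $\f$ is a smooth involution fixing the origin), we get $(\f'(0)-1)=-2$. It remains to evaluate $I^{\pm}(0)$. Setting $x=0$ in the integrand of \eqref{TpTn}, the argument of $X^{\pm}$ becomes $\big(0+(\f(0)-0)t,\ y^{\pm}(\pm\delta(\f(0)-0)t,0)\big)=(0,\,y^{\pm}(0,0))=(0,0)$, because $y^{\pm}(0,x)=0$ from the initial condition in \eqref{ivp}. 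Thus the integrand is the constant $1/X^{\pm}(0,0)$ and $I^{\pm}(0)=1/X^{\pm}(0,0)$. Putting the pieces together,
\[
\widehat T_1=T'(0)=\delta\big((T^-)'(0)-(T^+)'(0)\big)=\delta\left(\frac{-2}{X^-(0,0)}-\frac{-2}{X^+(0,0)}\right)=2\delta\,\frac{X^-(0,0)-X^+(0,0)}{X^+(0,0)X^-(0,0)}.
\]

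Finally I would check positivity. By condition \textbf{C3}, $X^+(0,0)X^-(0,0)<0$, so the denominator is negative, and by \eqref{delta} we have $\delta=\sgn(X^+(0,0))=-\sgn(X^-(0,0))$. If $X^+(0,0)>0$ and $X^-(0,0)<0$, then $\delta=1$ and the numerator $X^-(0,0)-X^+(0,0)<0$, so the whole expression is (positive)$\times$(negative)/(negative)$>0$; the case $X^+(0,0)<0$, $X^-(0,0)>0$ is symmetric, with $\delta=-1$ and numerator positive. Either way $\widehat T_1>0$. The conclusion about isochronicity is then immediate: an isochronous center requires $\widehat T_i=0$ for all $i\geq 1$, but $\widehat T_1>0$, a contradiction.

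The computation is entirely routine once \eqref{pf}--\eqref{TpTn} is in hand; there is no real obstacle. If anything, the only point requiring a little care is the justification that $I^{\pm}$ is smooth (hence differentiable) at $x=0$ — this rests on $X^{\pm}$ being nonvanishing on the neighborhood $U$, on the smooth dependence of $y^{\pm}(t,x)$ on its arguments, and on $\f$ being smooth near $0$ — but all of these are already recorded in the setup preceding the corollary, so the argument goes through directly.
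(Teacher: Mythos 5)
Your computation is correct and follows exactly the route the paper indicates: the paper offers no detailed proof, only the remark that the corollary "can be obtained from \eqref{pf} and \eqref{TpTn} just by noticing that $\f'(0)=-1$ and taking into account condition {\bf C3} and relationship \eqref{delta}," which is precisely what you carry out. Your product-rule evaluation, the identification $I^{\pm}(0)=1/X^{\pm}(0,0)$ via $y^{\pm}(0,x)=0$, and the two-case sign check are all sound.
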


\begin{remark}
Another problem related to the isochronicity problem is the {\it criticality problem}, which was introduced by Chicone and Jacobs \cite{chicone89} and  concerns about the bifurcation of critical periods of perturbed centers. A {\it critical period}  is defined as a critical point of the period function, that is, a  point $\rho > 0$ satisfying $T'(\rho) =0$. In addition, it is called {\it simple} provided that $T''(\rho) \neq 0$. 
 The number of critical periods and the number of simple critical periods provide, respectively, an upper and a lower bound for the number of oscillations of the period function. As a consequence of Corollary \ref{cor:noisochronous}, a perturbed tangential center does not admit critical periods in a neighborhood of $x=0$  and, therefore, the period function does not oscillate in this neighborhood.
\end{remark}

\section{Proof of Theorem \ref{teo:period}}\label{sec:period}

This section is devoted to the proof of Theorem \ref{teo:period}. It is based on a time-reparametrization of Filippov vector fields around a $(2k^{+}, 2k^{-})$-monodromic tangential singularity, for which the period function can be easily computed. Thus, the following result will be of major importance for recovering the period function of the original Filippov vector field:
\begin{proposition} [{\cite[Proposition 1.14]{chicone}}]\label{prop:criticality}
Let $U \in \mathbb{R}^n$ be an open set, $F: U \to \mathbb{R}^n$  a smooth vector field, and $g: U \to \mathbb{R}$ a positive smooth function. Consider the following differential equations
\begin{equation}\label{eq1:prop}
	\dot{x}=F(x)
\end{equation}
and 
\begin{equation}\label{eq2:prop}
	\dot{x}=g(x)F(x).
\end{equation}
If $J \subset \mathbb{R}$ is an open interval containing the origin and $\gamma: J \to \mathbb{R}^n$ is a solution of the differential equation \eqref{eq1:prop} with $\gamma (0)=x_0 \in U$, then the function $B: J \to \mathbb{R}$ given by 
\[
	B(t)= \int_{0}^{t} \dfrac{1}{g(\gamma(s))}ds
\] is invertible on its range $K \subset \mathbb{R}$. If $\rho: K \to J$ denotes the inverse of $B$, then the identity
\[
	\rho'(t)=g(\gamma(\rho(t)))
\]
	 holds for all $t \in K$ and the function $\sigma: K \to \mathbb{R}^n$ given by $\sigma(t)=\gamma(\rho(t))$ is the solution of the differential equation \eqref{eq2:prop} with initial condition $\sigma(0)=x_0.$
\end{proposition}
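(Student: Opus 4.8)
The plan is to recognize this as a standard time-reparametrization argument: the solution $\sigma$ of the rescaled equation \eqref{eq2:prop} should trace exactly the same orbit as $\gamma$, only traversed at a different speed dictated by $g$. The entire statement then reduces to verifying that the time change $t \mapsto B(t)$ is a well-defined smooth diffeomorphism onto its range and that composing $\gamma$ with its inverse produces a solution of \eqref{eq2:prop} with the correct initial value.

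First I would establish that $B$ is invertible on $J$. Since $\gamma$ solves the smooth equation \eqref{eq1:prop}, it is smooth, and since $g$ is positive and smooth, the map $s \mapsto 1/g(\gamma(s))$ is smooth and strictly positive on $J$. Hence $B$ is smooth with $B'(t) = 1/g(\gamma(t)) > 0$ for all $t \in J$, so $B$ is strictly increasing and therefore a bijection from $J$ onto its range $K$; moreover $B(0)=0$ since the integral over a degenerate interval vanishes. As $B$ has nowhere-vanishing derivative, the inverse function theorem guarantees that its inverse $\rho: K \to J$ is itself smooth.

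Next I would compute $\rho'$ and verify the two conclusions. Differentiating the identity $B(\rho(t)) = t$ and using $B'(\rho(t)) = 1/g(\gamma(\rho(t)))$ yields
\[
\rho'(t) = \frac{1}{B'(\rho(t))} = g(\gamma(\rho(t))),
\]
which is the claimed identity. Then, for $\sigma(t) = \gamma(\rho(t))$, the chain rule together with $\gamma'(s) = F(\gamma(s))$ gives
\[
\sigma'(t) = \gamma'(\rho(t))\,\rho'(t) = F(\gamma(\rho(t)))\, g(\gamma(\rho(t))) = g(\sigma(t))\, F(\sigma(t)),
\]
so $\sigma$ solves \eqref{eq2:prop}. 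Finally, since $B(0)=0$ forces $\rho(0)=0$, we obtain $\sigma(0) = \gamma(0) = x_0$, matching the prescribed initial condition.

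I do not expect a serious obstacle here: the argument is entirely routine once the invertibility of $B$ is in place. The only points requiring care are bookkeeping ones, namely confirming that $\rho$ maps $K$ back into the domain $J$ on which $\gamma$ is defined (which is automatic, as $\rho = B^{-1}$ takes values in $J$) and ensuring that the smoothness hypotheses on $F$ and $g$ are invoked precisely where the chain rule and the inverse function theorem are applied. The crucial hypothesis is the positivity of $g$, since it is exactly what makes $B$ strictly monotone and hence invertible.
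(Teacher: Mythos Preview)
Your argument is correct and is precisely the standard time-reparametrization proof: monotonicity of $B$ from positivity of $1/g\circ\gamma$, the inverse function theorem for $\rho$, and the chain rule for $\sigma$. There is nothing to compare against in the paper itself, since the proposition is quoted from \cite{chicone} without proof and used only as a tool in the proof of Theorem~\ref{teo:period}.
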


\subsection{Reparametrization of time}\label{sec:cf}

As commented above, assuming that the Filippov vector field \eqref{sistemainicial} has a $(2k^{+}, 2k^{-})$-tangential center at the origin (see conditions {\bf C1}, {\bf C2}, and {\bf C3}), there exists a small neighborhood $U$ of the origin such that $X^{\pm}(x,y) \neq0$ for all $(x,y) \in U.$  Taking into account that $|X^{\pm}(x,y)| = \pm \delta X^{\pm}(x,y)$ for every $(x,y)\in U,$ a reparametrization of time can be performed in order to transform the Filippov vector field \eqref{sistemainicial} restricted to $U$ into \begin{equation} \label{sistemacanonico} 
		\widetilde{Z}(x,y)=\begin{cases}
			\widetilde{Z}^+(x,y)=(\delta, \eta^+(x,y)), &y > 0, \\
			\widetilde{Z}^-(x,y)=(-\delta, \eta^-(x,y)), &y < 0,
		\end{cases}
	\end{equation}
	where  $ \eta^{\pm}$ and $\delta$ are given by  \eqref{eq:eta} and \eqref{delta}, respectively. Notice that
	\begin{equation}\label{eq:rep}
	\widetilde{Z}^{\pm}(x,y)=\dfrac{Z^{\pm}(x,y)}{|X^{\pm}(x,y)|}.
	\end{equation}
It is worth mentioning that the time-reparametrization above was the first step  in \cite{novaesrondon2020}  for the obtention of a canonical form for Filippov vector fields around a $(2k^{+}, 2k^{-})$-monodromic tangential singularity. Afterward, such a canonical form was used in \cite{novaessilva2020} for computing the Lyapunov coefficients of $(2k^{+}, 2k^{-})$-monodromic tangential singularities.

\begin{lemma}
Assume that the Filippov vector field \eqref{sistemainicial} has a $(2k^{+},2k^-)$-tangential center at the origin and let $\f$ be the associated half-return map. Then, the period function of its time-reparametrization \eqref{sistemacanonico} is given by $\widetilde{T}(x)= 2(x-\f(x))$.
\end{lemma}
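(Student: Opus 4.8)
The plan is to compute the period function of the reparametrized Filippov vector field \eqref{sistemacanonico} directly, by following a trajectory through its two halves and adding up the time. Since the trajectory winds around the origin, it must cross $\Sigma$ twice per revolution. Start at a point $(x,0)\in\Sigma$. The flow of $\widetilde{Z}^+$ (valid for $y\ge0$) has horizontal component identically equal to $\delta$, so along this arc the $x$-coordinate evolves as $x(t)=x+\delta t$; the arc remains in $y\ge0$ and returns to $\Sigma$ exactly when $x$-coordinate equals $\f(x)$, i.e. at time $t_+$ with $x+\delta t_+=\f(x)$, so $t_+=\delta(\f(x)-x)$. Likewise, the flow of $\widetilde{Z}^-$ (valid for $y\le0$) has horizontal component $-\delta$, so starting from $(\f(x),0)$ it reaches $\Sigma$ again at the point with $x$-coordinate $\f(\f(x))=x$ (using that $\f$ is an involution), at time $t_-$ with $\f(x)-\delta t_-=x$, hence $t_-=\delta(\f(x)-x)$. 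The period is $\widetilde T(x)=t_++t_-=2\delta(\f(x)-x)$.

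First I would make precise why the $\widetilde{Z}^+$-trajectory issuing from $(x,0)$ does reach $\Sigma$ again precisely at $(\f(x),0)$: by construction $\f$ is the half-return map of $Z^+|_{y\ge0}$, and the reparametrization \eqref{eq:rep} only rescales time by the positive factor $1/|X^+|$, so it does not change trajectories as sets — only their parametrization. Hence the first return of the $\widetilde{Z}^+$-flow to $\Sigma$ is at the same point $(\f(x),0)$ as for $Z^+$. The same argument applies on the lower side, with half-return map $\f^-=\f$ (we are at a tangential center, so $\f^+=\f^-$). Then I would read off the crossing times purely from the trivial horizontal dynamics $\dot x=\pm\delta$, as above, and conclude $\widetilde T(x)=2\delta(\f(x)-x)$.

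One small sign bookkeeping point deserves care: depending on the sign of $\delta$ and on whether $\f(x)>x$ or $\f(x)<x$, one must check that $t_+$ and $t_-$ are genuinely positive (a period must be). Since $\f$ is an orientation-reversing involution fixing $0$ with $\f'(0)=-1$, on one side of the origin $\f(x)-x$ has a definite sign, and $\delta$ encodes the sense of rotation (clockwise for $\delta>0$, as remarked after \eqref{delta}); matching these shows $\delta(\f(x)-x)>0$, so both half-times are positive and $\widetilde T(x)=2\delta(\f(x)-x)>0$, consistent with the statement $\widetilde T(x)=2(x-\f(x))$ once one notes that the lemma's formula is written for the rotation sense with $\delta=-1$ — more precisely, $2\delta(\f(x)-x)$ and $2(x-\f(x))$ agree up to the convention, and I would simply present it as $\widetilde T(x)=2|\f(x)-x|=2\delta(\f(x)-x)$ and reconcile with the stated form. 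The main (and only real) obstacle is this orientation/sign reconciliation; the dynamical content is immediate because the reparametrized field has constant horizontal speed, which is exactly the point of introducing \eqref{sistemacanonico}.
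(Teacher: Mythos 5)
Your overall strategy is the same as the paper's: the reparametrization has constant horizontal speed $\pm\delta$, so flight times can be read off the $x$-coordinate, and since \eqref{eq:rep} only rescales time by a positive factor the return point on $\Sigma$ is still $\f(x)$. The problem is precisely the sign bookkeeping that you flag as "the only real obstacle" and then resolve incorrectly. For $x>0$ small one has $\f(x)<0<x$ (because $\f$ is an involution with $\f'(0)=-1$), so $\f(x)-x<0$ \emph{independently of} $\delta$; hence $\delta(\f(x)-x)>0$ if and only if $\delta<0$, and your claim that "matching" orientation with the sign of $\f(x)-x$ gives $\delta(\f(x)-x)>0$ is false for $\delta>0$. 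Concretely, when $\delta>0$ (clockwise rotation) the periodic orbit through $(x,0)$ with $x>0$ enters the lower half-plane first: condition {\bf C2} forces $\mathrm{sign}\,Y^{+}(x,0)=-\delta\,\mathrm{sign}(x^{2k^+-1})$, so the forward $\widetilde Z^{+}$-trajectory from $(x,0)$ immediately leaves $y\geq0$, and the upper arc from $(x,0)$ to $(\f(x),0)$ is traversed in \emph{backward} time. Your $t_+$ and $t_-$ are then both negative, your sum $2\delta(\f(x)-x)$ is the negative of the period, and the identity $2|\f(x)-x|=2\delta(\f(x)-x)$ you invoke to repair this is false in that case. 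Likewise, the lemma's formula is not "written for $\delta=-1$": $2(x-\f(x))=2|\f(x)-x|$ is the correct, nonnegative period for both orientations.

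The paper sidesteps this by working with \emph{signed} half flight times $\widetilde T^{\pm}(x)=\pm\delta(\f(x)-x)$ (the time along $\widetilde Z^{\pm}$ from $(x,0)$ to $(\f(x),0)$, negative for whichever half-plane is traversed in backward time) and combining them as $\widetilde T(x)=\delta\big(\widetilde T^{-}(x)-\widetilde T^{+}(x)\big)$; the prefactor $\delta$ is exactly what makes the result $2(x-\f(x))\geq0$ uniformly in the orientation. Your argument is repairable: split into the cases $\delta=\pm1$, observe that in each case exactly one of the two arcs is traversed by the corresponding field in forward time, and add the two \emph{positive} durations $|t_+|+|t_-|=2|\f(x)-x|=2(x-\f(x))$. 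As written, though, the derivation does not yield the stated formula when $\delta=+1$.
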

\begin{proof}
In order to compute the period function of \eqref{sistemacanonico}, we shall consider the half-period functions $\widetilde{T}^+$ and $\widetilde{T}^-$ defined, respectively, as the flight-times taken for the trajectories of $\widetilde{Z}^+$ and $\widetilde{Z}^-,$ starting at $(x, 0) \in \Sigma\cap U$, for $x\geq0$, to reach $\Sigma$ again. Accordingly, for $x\geq0$ small, the period function $\widetilde{T}$ of \eqref{sistemacanonico} is defined as $\widetilde{T}(x)= \delta (\widetilde{T}^-(x)-\widetilde{T}^+(x))$. Notice that $\widetilde T(x)\geq 0$. Indeed, for $\delta>0$ (resp. $\delta<0$) one has that the flow of $Z$ turns around the origin in the clockwise (resp. anti-clockwise) direction and, therefore, 
$\widetilde T^+(x)\leq 0\leq \widetilde T^-(x)$ (resp. $\widetilde T^-(x)\leq 0\leq \widetilde T^+(x)$) for every $x\geq0$ such that $(x, 0) \in \Sigma\cap U$.

Now, the trajectories of $\widetilde Z^{\pm}$ with initial condition $(x,0)\in \Sigma\cap U$ are given by
\begin{equation} \label{solution}
\gamma^{\pm}(t,x) = (x \pm \delta t, y^{\pm}(t,x)),
\end{equation}
where $t\mapsto y^{\pm}(t,x)$ is the solution of the initial value problem \eqref{ivp}.

 	Consider the transversal sections $\Sigma^{\perp}_+= \{(x,y) \in U: x=0,\, y>0 \}$ and $\Sigma_-^{\perp}= \{(x,y) \in U: x=0,\, y<0 \}$ (see Fig. \ref{mapadobradobra}). It is clear that the flight-times taken for the trajectories of $Z^{+}$, starting at the points $(x,0)$ and $(\f(x),0),$ to reach  the section $\Sigma^{\perp}_{+}$ are given by $t_1^{+}=-\delta x$ and $t_2^{+}=-\delta\f(x)$, respectively. Analogously, the flight-times taken for the trajectories of $Z^{-}$, starting at the points $(x,0)$ and at $(\f(x),0),$ to reach the section $\Sigma^{\perp}_{-}$ are given by $t_1^{-}=\delta x$ and $t_2^{-}=\delta\f(x)$, respectively. Therefore, the half-period functions are given by $\widetilde{T}^{\pm}(x)=t_1^{\pm}-t_2^{\pm}=\pm\delta(\f(x)-x)$, which yields $\widetilde{T}(x)=\delta (\widetilde{T}^-(x)-\widetilde{T}^+(x))=2(x-\f(x))$. \end{proof}

 \begin{figure}[H]
 \bigskip
	\centering
	\begin{overpic}[scale=2.18]{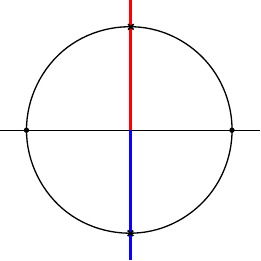}
		\put(102,48){$\Sigma$}
		\put(48,102){$\Sigma^{\perp}_+$}
		\put(48,-5){$\Sigma^{\perp}_-$}
		\put(-5,85){$t_2^+=-\delta\f(x)$}
		\put(74,85){$t_1^+=-\delta x $}
		\put(-5,13){$t_2^-= \delta \f(x)$}
		\put(74,13){$t_1^-= \delta x $}
		\put(12,43){$(\f(x),0)$}
		\put(75,43){$(x,0)$}
	\end{overpic}
	\vspace{0.8cm}
	\caption{Transversal sections $\Sigma^{\perp}_{-}$ and $\Sigma^{\perp}_{+}$ and the flight-times taken for the trajectories of $\widetilde Z^+$ and $\widetilde Z^-$, starting at the points $(x,0)$ and $(\f(x),0),$ to reach $\Sigma^{\perp}_{-}$ and $\Sigma^{\perp}_{+}$, respectively.}
	\label{mapadobradobra}
\end{figure}

\subsection{Proof of Theorem \ref{teo:period}}\label{proof:teoB}

Now, Lemma \ref{sec:period} can be applied together with Proposition \ref{prop:criticality} in order to compute the period function of the Filippov vector field \eqref{sistemainicial} by assuming that it has a $(2k^+, 2k^-)$-tangential center.

Analogously to the proof of Lemma \ref{sec:period}, the period function $T$ of \eqref{sistemainicial} is given by
\[ 
	T(x)= \delta (T^-(x)-T^+(x))\geq 0,
\]
where $T^{\pm}$ are the half-period functions of $Z^{\pm}$.

From \eqref{eq:rep}, $Z^{\pm}|_{U}=g^{\pm}(x,y)\widetilde{Z}^{\pm}$, where $g^{\pm}(x,y)=|X^{\pm}(x,y)|= \pm\delta X^{\pm}(x,y)$. Hence, if $\sigma^{\pm}(t,x)$ denotes the trajectory of $Z^{\pm}$ with initial condition $(x,0)\in\Sigma\cap U$, then, from Proposition \ref{prop:criticality}, 
\begin{equation}\label{sigma}
\sigma^{\pm}(t,x)=\gamma^{\pm}(\rho^{\pm}_x(t),x),
\end{equation}
where $\gamma^{\pm}(t,x),$ given by \eqref{solution}, is the trajectory of $\widetilde Z^{\pm}$ with initial condition $(x,0)\in\Sigma\cap U$, $\rho^{\pm}_{x}=(B^{\pm}_{x})^{-1}$, and
$$B_{x}^{\pm}(\tau)=\int_{0}^{\tau} \dfrac{1}{g^{\pm}(\gamma^{\pm}(s,x))}ds=\int_0^{\tau} \dfrac{\pm \delta}{X^{\pm}(x\pm \delta s, y^{\pm}(s, x))} ds.$$ 

Thus, from \eqref{sigma},
\[
 \gamma^{\pm}(\rho^{\pm}_{x}(T^{\pm}(x)),x)=\sigma^{\pm}(T^{\pm}(x),x) \in \Sigma\cap U.
\]
Consequently, from the characterization of the half-period functions $\widetilde T^{\pm}$ of \eqref{sistemacanonico}, we have that 
\begin{equation*}
	\rho_{x}^{\pm}(T^{\pm}(x))= {\widetilde{T}}^{\pm}(x) = \pm \delta(\f(x)- x),
\end{equation*} 
which implies that
\begin{equation*}
	T^{\pm}(x) =B_{x}^{\pm}(\pm\delta(\f(x)- x)).
\end{equation*}
Hence,
\begin{equation*}
	\begin{aligned}
	 T^{\pm}(x)= \int^{ \pm\delta(\f(x)- x)}_0 \dfrac{\pm \delta}{X^{\pm}(x\pm \delta s, y^{\pm}(s, x))}ds.
	\end{aligned}
\end{equation*}

We conclude this proof by performing the change of variables $s= \pm \delta(\f(x)-x)t$ in the integrals above, which yields
\[
T^{\pm}(x)=(\f(x)-x) \int_0^1 \dfrac{1}{X^{\pm}(x+(\f(x)-x) t, y^{\pm}(\pm \delta (\f(x)-x) t, x))} dt.
\]

\section*{Appendix: Period Constants}
This appendix provides the formulae for the computation of the period constants.
Notice that, from \eqref{def:pc} and \eqref{pf},  
\[
\widehat{T}_i = \dfrac{1}{i!}T^{(i)}(0)=\dfrac{1}{i!}\delta\big((T^-)^{(i)}(0)-(T^+)^{(i)}(0)\big).
\]
 In addition, from \eqref{TpTn},
\[
(T^{\pm})^{(i)}(0) = \int_0^{1} \dfrac{\partial^i}{\partial x^i} \left(\dfrac{\f(x)-x}{X^{\pm}(x+(\f(x)-x) t, y^{\pm}(\pm \delta (\f(x)-x) t, x))} \right) \Bigg|_{x=0}\,ds.
\]

Thus, in order to compute $\widehat{T}_i,$ it only remains to know how to compute the higher derivatives of the functions $x\mapsto y^{\pm}(\pm \delta (\f(x)-x) t, x))$ and $\f(x)$ at $x=0$. This has been done in \cite{novaessilva2020} as follows:

 Let
\begin{equation}\label{eq:seriesy}
	y^{\pm}(t, x) = \sum_{i=1}^{\infty} \dfrac{y_i^{\pm}(x)}{i!}t^i
\end{equation}
 and 
\begin{equation}\label{eq:series}
	\begin{aligned}
		{\varphi}(x) = -x + \sum_{n=2}^{\infty}\alpha_n x^n.
	\end{aligned}
\end{equation}
 Recall that ${\varphi}'(0)=-1$.

The coefficient functions $y_i$  of the series \eqref{eq:seriesy} are given recursively by
\[
	\begin{aligned}
		y_1^{\pm} (x)=& a^{\pm} x^{2k^{\pm}-1}+x^{2k^{\pm}}f^{\pm}(x),&\\
		y_i^{\pm} (x) = & (\pm \delta)^{i-1}\left( a^{\pm} \dfrac{(2k^{\pm}-1)!}{(2k^{\pm}-i)!}x^{2k^{\pm}-i}  +\sum_{l=0}^{i-1} {{i-1}\choose{l}}  \dfrac{(2k^{\pm})!}{(2k^{\pm}-l)!}x^{2k^{\pm}-l} {f^{\pm}}^{(i-1-l)} (x)\right) & \\  & +\sum_{l=1}^{i-1} \sum_{j=1}^{l}j {i-1\choose l} (\pm \delta)^{i-l-1}B_{l,j}(y^{\pm}_1(x),\dots , y^{\pm}_{l-j+1}(x))  \dfrac{\partial^{j+i-l-2}g^{\pm}}{\partial x^{i-l-1} \partial y^{j-1}}(x,0), \text{ if } 2\leq i\leq 2k^{\pm},& \\
		y_i^{\pm}(x) = & (\pm\delta)^{i-1} \Bigg(  {{i-1}\choose{2k^{\pm}}}(2k^{\pm})!{f^{\pm}}^{i-1-2k^{\pm}}(x)   +\sum_{l=0}^{2k^{\pm}-1} {{i-1}\choose{l}}  \dfrac{(2k^{\pm})!}{(2k^{\pm}-l)!}x^{2k^{\pm}-l}{f^{\pm}}^{(i-l-1)}(x)\Bigg) &  \\  &+ \sum_{l=1}^{i-1}\sum_{j=1}^{l}j {{i-1}\choose{l}} (\pm \delta)^{i-l-1}B_{l,j}(y_1^{\pm}(x),\dots , y^{\pm}_{l-j+1}(x)) \dfrac{\partial^{j+i-l-2}g^{\pm}}{\partial x^{i-l-1} \partial y^{j-1}}(x,0),\,\,  \text{ if }\, i>2k^{\pm},
	\end{aligned}
\]
with
\[
\begin{array}{l}
f^{\pm}(x)=\dfrac{\pm \delta Y^{\pm}(x,0)-a^{\pm}x^{2k^{\pm}-1} X^{\pm}(x,0)}{x^{2k^{\pm}} X^{\pm}(x,0)},\vspace{0.2cm}\\
g^{\pm}(x,y)=\dfrac{\pm X^{\pm}(x,0)Y^{\pm}(x,y) \mp X^{\pm}(x,y)Y^{\pm}(x,0)}{y \delta X^{\pm}(x,y)X^{\pm}(x,0)},
\end{array}
\]
and
\[
a^{\pm}=\dfrac{1}{(2k^{\pm}-1)!|X^{\pm}(0,0)|}\dfrac{\partial^{2k^{\pm}-1}Y^{\pm}}{\partial x^{2k^{\pm}-1}}(0,0).
\]

\bigskip

The coefficients  $\alpha_n$ of the series \eqref{eq:series} are given recursively by
\[
	\begin{cases}
		\alpha_1= -1,\vspace{0.1cm}\\
		\alpha_n=\dfrac{p^{\pm}_{n,k^{\pm}}(\alpha _1, \alpha _2, \cdots \alpha _{n-1}) - \mu^{\pm}_{n + 2k^{\pm} -1}}{2k^{\pm} \mu^{\pm}_{2k^{\pm}}}, 
	\end{cases}
\]
where 
\[
p^{\pm}_{n,k }\big(\alpha_1,\ldots,\alpha_{n-1}\big)= \mu^{\pm}_{2k } \hat B_{n+2k -1,2k}\big(\alpha_1,\ldots,\alpha_{n -1},0\big) +  \sum_{i=2k +1}^{n+2k -1} \mu^{\pm}_i \hat B_{n+2k -1,i}\big(\alpha_1,\ldots,\alpha_{n+2k -i}\big),
\]
and
\[
\mu_i^{\pm}= \dfrac{1}{i!}\sum_{j=1}^{i} (\mp \delta)^j {{i}\choose{j}} (y_j^\pm)^{(i-j)}(0).
\]

\bigskip

In the recurrences above, for $p$ and $q$ positive integers, $B_{p,q}$ and $\hat B_{p,q}$ denote, respectively, the {\it partial Bell polynomials} and  {\it ordinary Bell polynomials} (see, for instance \cite{comtet}):
\[
\begin{array}{l}
\displaystyle B_{p,q}(x_1,\ldots,x_{p-q+1})=\sum\dfrac{p!}{b_1!\,b_2!\cdots b_{p-q+1}!}\prod_{j=1}^{p-q+1}\left(\dfrac{x_j}{j!}\right)^{b_j} \text{ and }\vspace{0.2cm}\\
\displaystyle \hat B_{p,q}(x_1,\ldots,x_{p-q+1})=\sum\dfrac{p!}{b_1!\,b_2!\cdots b_{p-q+1}!}\prod_{j=1}^{p-q+1}x_j^{b_j},
\end{array}
\]
where the sums are performed  
on the $(p-q+1)$-tuples of nonnegative integers $(b_1,b_2,\cdots,b_{p-q+1})$ satisfying $b_1+2b_2+\cdots+(p-q+1)b_{p-q+1}=p,$ and
$b_1+b_2+\cdots+b_{p-q+1}=q.$ The partial Bell polynomials are implemented in algebraic manipulators as Mathematica and Maple, while the ordinary Bell polynomials can be computed as follows
\[
 \hat B_{p,q}(x_1,\ldots,x_{p-q+1})=\dfrac{q!}{p!}B_{p,q}(1!x_1,\ldots,(p-q+1)!x_{p-q+1}).
\]

Furthermore, in \cite[Appendix A]{novaessilva2020}, one can find implemented Mathematica algorithms for computing $y_i$ and $\alpha_n$.

\section*{Acknowledgments}

The authors thank the referee for the constructive comments and
suggestions which led to an improved version of the manuscript.

DDN is partially supported by S\~{a}o Paulo Research Foundation (FAPESP) grants 2021/10606-0, 2018/ 13481-0, and 2019/10269-3, and by Conselho Nacional de Desenvolvimento Cient\'{i}fico e Tecnol\'{o}gico (CNPq) grants 306649/2018-7,  438975/2018-9, and 309110/2021-1. LAS is partially supported by Coordena\c{c}\~{a}o de Aperfei\c{c}oamento de Pessoal de N\'{i}vel Superior (CAPES) grant 001.

\bibliographystyle{abbrv}
\bibliography{references}

\begin{thebibliography}{10}

\bibitem{huygens}
A.~E. Bell.
\newblock The horologium oscillatorium of christian huygens.
\newblock {\em Nature}, 148(3748):245--248, 1941.

\bibitem{buzzigassul13}
C.~Buzzi, C.~Pessoa, and J.~Torregrosa.
\newblock Piecewise linear perturbations of a linear center.
\newblock {\em Discrete \& Continuous Dynamical Systems}, 33(9):3915--3936,
  2013.

\bibitem{chavarriga99}
J.~Chavarriga and M.~Sabatini.
\newblock A survey of isochronous centers.
\newblock {\em Qualitative Theory of Dynamical Systems}, 1(1):1, 1999.

\bibitem{chicone}
C.~Chicone.
\newblock {\em Ordinary differential equations with applications}, volume~34 of
  {\em Texts in Applied Mathematics}.
\newblock Springer, New York, second edition, 2006.

\bibitem{chicone89}
C.~Chicone and M.~Jacobs.
\newblock Bifurcation of critical periods for plane vector fields.
\newblock {\em Transactions of The American Mathematical Society - TRANS AMER
  MATH SOC}, 312, 02 1989.

\bibitem{coll00}
B.~Coll, A.~Gasull, and R.~Prohens.
\newblock Center-focus and isochronous center problems for discontinuous
  differential equations.
\newblock {\em Discrete Contin. Dynam. Systems}, 6(3):609--624, 2000.

\bibitem{comtet}
L.~Comtet.
\newblock {\em Advanced combinatorics}.
\newblock D. Reidel Publishing Co., Dordrecht, enlarged edition, 1974.
\newblock The art of finite and infinite expansions.

\bibitem{AndGomNov19}
K.~da~S.~Andrade, O.~M.~L. Gomide, and D.~D. Novaes.
\newblock Qualitative analysis of polycycles in {F}ilippov systems.
\newblock {\em arXiv:1905.11950}, 2019.

\bibitem{Filippov88}
A.~F. Filippov.
\newblock {\em Differential equations with discontinuous righthand sides},
  volume~18 of {\em Mathematics and its Applications (Soviet Series)}.
\newblock Kluwer Academic Publishers Group, Dordrecht, 1988.
\newblock Translated from the Russian.

\bibitem{manosas05}
F.~Ma\~{n}osas and P.~J. Torres.
\newblock Isochronicity of a class of piecewise continuous oscillators.
\newblock {\em Proc. Amer. Math. Soc.}, 133(10):3027--3035, 2005.

\bibitem{novaesrondon2020}
D.~D. Novaes and G.~Rond{\'{o}}n.
\newblock Smoothing of nonsmooth differential systems near regular-tangential
  singularities and boundary limit cycles.
\newblock {\em Nonlinearity}, 34(6):4202--4263, jun 2021.

\bibitem{novaessilva2020}
D.~D. Novaes and L.~A. Silva.
\newblock Lyapunov coefficients for monodromic tangential singularities in
  filippov vector fields.
\newblock {\em Journal of Differential Equations}, 300:565--596, 2021.

\bibitem{pleshkan69}
I.~Pleshkan.
\newblock A new method of investigating the isochronicity of a system of two
  differential equations.
\newblock {\em Differential Equations}, 5(4):796--802, 1969.

\bibitem{romanovski2009center}
V.~Romanovski and D.~Shafer.
\newblock {\em The Center and Cyclicity Problems: A Computational Algebra
  Approach}.
\newblock Birkh{\"a}user Boston, 2009.

\bibitem{romanovski2007}
V.~G. Romanovski, X.~Chen, and Z.~Hu.
\newblock Linearizability of linear systems perturbed by fifth degree
  homogeneous polynomials.
\newblock {\em Journal of Phisics}, 40(22):5905--5919, may 2007.

\bibitem{sanchez}
I.~Sánchez-Sánchez and J.~Torregrosa.
\newblock Criticality via first order development of the period constants.
\newblock {\em Nonlinear Analysis}, 203:112164, 2021.

\end{thebibliography}

\end{document}